\newtheorem{theorem}{Theorem}[section]
\newtheorem{lemma}[theorem]{Lemma}
\theoremstyle{definition}
\newtheorem{definition}[theorem]{Definition}
\newtheorem{proposition}[theorem]{Proposition}
\newtheorem{notation}[theorem]{Notation}
\newtheorem{remark}[theorem]{Remark}
\numberwithin{equation}{section}
\begin{document}
\title[Eigenvalues of the Diffusion Operator]{Multiplicities of Eigenvalues
of the Diffusion Operator with Random Jumps from the Boundary}
\author{Jun Yan}
\address{School of Mathematics, Tianjin University, Tianjin, 300354, P.
R. China}
\email{junyantju@126.com}
\author{Guoliang Shi}
\address{School of Mathematics, Tianjin University, Tianjin, 300354, P.
R. China}
\email{glshi@tju.edu.cn}
\date{\today }
\thanks{ *This research was supported by the National Natural Science
Foundation of China under Grant No. 11601372.}
\subjclass[2010]{Primary 34L15; Secondary 47A10, 60J60}
\keywords{diffusions, eigenvalues, non-self-adjoint, multiplicity}

\begin{abstract}
This paper deals with a non-self-adjoint differential operator which is
associated with a diffusion process with random jumps from the boundary. Our
main result is that the algebraic multiplicity of an eigenvalue is equal to
its order as a zero of the characteristic function $\Delta(\lambda) $. This can be used to determine the
multiplicities of eigenvalues for concrete operators.
\end{abstract}

\maketitle

\section{Introduction}

This article investigates the non-self-adjoint differential operator $L$ in $%
L_{w}^{2}(J,%
\mathbb{C}
)$ generated by the differential expression
\begin{equation*}
Ly=ly:=b_{0}(x)y^{\prime \prime }+b_{1}(x)y^{\prime }
\end{equation*}%
and%
\begin{equation*}
\text{dom}(L):=\left\{ y\in L_{w}^{2}(J,%
\mathbb{C}
)\left\vert
\begin{array}{l}
y,y^{\prime }\in AC[0,1],Ly\in L_{w}^{2}(J,%
\mathbb{C}
) \\
y(0)=\int_{0}^{1}y(x)\mathrm{d}\nu _{0}(x),\text{ }y(1)=\int_{0}^{1}y(x)%
\mathrm{d}\nu _{1}(x)%
\end{array}%
\right. \right\} .
\end{equation*}%
Here $\nu _{0},\nu _{1}$ are probability distributions on $J:=(0,1)\ $and%
\begin{equation*}
w:=-\frac{1}{b_{0}},\text{ }\frac{b_{1}}{b_{0}}\in L^{1}(J,%
\mathbb{C}
),\text{ }b_{0}<0\text{ a.e. on }(0,1).
\end{equation*}%
It is well known that the operator $L$ is associated with a diffusion
process with jumping boundary which can be easily described. In this
process, whenever the boundary of the interval $\left[ 0,1\right] $ is
reached, the diffusion gets redistributed in $(0,1)$ according to the
probability distributions $\nu _{0},\nu _{1},$ runs again until it hits the
boundary, gets redistributed and repeats this behavior forever. Due to its
probabilistic significance, the process leads to several interesting results
(see, e.g., \cite{iddo,iddo2,ilie,ilie2,kolb3,wen,feller,diffusion,kolb2}
and the references therein).

Let us mention that in the case of $b_{0}(x)\equiv -1,$ $b_{1}(x)\equiv 0,$ $%
\nu _{0}=\nu _{1}=\delta _{a},$ $a\in (0,1),$ M. Kolb and D. Krej\v{c}i\v{r}%
\'{\i}k in \cite{diffusion} analyzed the geometric and algebraic
multiplicities of the eigenvalues from a purely operator-theoretic
perspective and showed that all the eigenvalues of $L$ are algebraically
simple if, and only if, $a\notin
\mathbb{Q}
$. Based on this, they studied the basis properties of $L$. This is our
starting point and we aim to develop a further result on the multiplicities
of eigenvalues of $L$ in a general setting.

Let $y_{1}(x,\lambda )$ and $y_{2}(x,\lambda )\ $be the fundamental
solutions of
\begin{equation}
b_{0}(x)y^{\prime \prime }(x)+b_{1}(x)y^{\prime }(x)=\lambda y(x)
\label{fangcheng}
\end{equation}%
determined by the initial conditions
\begin{equation*}
y_{1}(0,\lambda )=y_{2}^{\prime }(0,\lambda )=1,y_{1}^{\prime }(0,\lambda
)=y_{2}(0,\lambda )=0.
\end{equation*}%
Denote%
\begin{equation}
\Delta (\lambda ):=\det \left(
\begin{array}{cc}
\int_{0}^{1}y_{1}(x,\lambda )\mathrm{d}\nu _{0}(x)-1 & \int_{0}^{1}y_{2}(x,%
\lambda )\mathrm{d}\nu _{0}(x) \\
\int_{0}^{1}y_{1}(x,\lambda )\mathrm{d}\nu _{1}(x)-y_{1}(1,\lambda ) &
\int_{0}^{1}y_{2}(x,\lambda )\mathrm{d}\nu _{1}(x)-y_{2}(1,\lambda )%
\end{array}%
\right) .  \label{chara}
\end{equation}%
Then direct calculation yields that $\lambda$ is an eigenvalue of $L$ if and
only if $\Delta (\lambda )=0.$

Let us now present the main theorem of this paper.

\begin{theorem}
\label{equal}Assume $\lambda _{0}$ be an eigenvalue of $L$ with algebraic
multiplicity $\chi (\lambda _{0}).\ $Let $n_{0}$ denote the order of $%
\lambda _{0}$ as a zero of $\Delta (\lambda )$. Then $\chi (\lambda
_{0})=n_{0}.$
\end{theorem}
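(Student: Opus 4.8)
The plan is to compute the algebraic multiplicity through the generalized eigenspace, $\chi(\lambda_{0})=\dim\bigcup_{k\ge1}\ker(L-\lambda_{0})^{k}$, and to turn this into a purely local statement about the $2\times2$ holomorphic matrix $M(\lambda)$ whose determinant is $\Delta(\lambda)$. Introduce the boundary defect $B(y):=\bigl(y(0)-\int_{0}^{1}y\,\mathrm{d}\nu_{0},\ y(1)-\int_{0}^{1}y\,\mathrm{d}\nu_{1}\bigr)^{\top}$, so that a function $y$ in the maximal domain of $l$ lies in $\mathrm{dom}(L)$ exactly when $B(y)=0$. Using the initial conditions of $y_{1},y_{2}$ one checks directly that for every solution $y=c_{1}y_{1}(\cdot,\lambda)+c_{2}y_{2}(\cdot,\lambda)$ of $ly=\lambda y$ one has $B(y)=-M(\lambda)(c_{1},c_{2})^{\top}$; in particular $\ker(L-\lambda_{0})\cong\ker M(\lambda_{0})$, which recovers the eigenvalue criterion.

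Next I would describe the generalized kernels. Fix $p\ge1$, let $A(\lambda)=\sum_{j=0}^{p-1}a_{j}(\lambda-\lambda_{0})^{j}$ range over the $\mathbb{C}^{2}$-valued polynomials of degree $\le p-1$, and set $Y(x,\lambda):=A_{1}(\lambda)y_{1}(x,\lambda)+A_{2}(\lambda)y_{2}(x,\lambda)$. Since $(l-\lambda)Y(\cdot,\lambda)\equiv0$, expanding $Y(x,\lambda)=\sum_{m\ge0}Y_{m}(x)(\lambda-\lambda_{0})^{m}$ and comparing powers of $\lambda-\lambda_{0}$ gives $(l-\lambda_{0})Y_{m}=Y_{m-1}$ with $Y_{-1}:=0$, so $Y_{p-1},\dots,Y_{0}$ is a Jordan-type chain and $(l-\lambda_{0})^{p}Y_{p-1}=0$. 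Each $Y_{m}$ is a finite combination of solutions of linear ordinary differential equations with $L^{1}$ coefficients on the bounded interval $J$, hence it meets every requirement in the definition of $\mathrm{dom}(L)$ except possibly $B(Y_{m})=0$; therefore $Y_{0},\dots,Y_{p-1}$ all belong to $\mathrm{dom}(L)$ exactly when the Taylor coefficients of $B(Y(\cdot,\lambda))=-M(\lambda)A(\lambda)$ up to order $p-1$ vanish, i.e.\ $M(\lambda)A(\lambda)=O\bigl((\lambda-\lambda_{0})^{p}\bigr)$. I would then verify that $A\mapsto Y_{p-1}$ is a linear isomorphism onto the $2p$-dimensional space $\{y:(l-\lambda_{0})^{p}y=0\}$; the essential point here is that $\{\partial_{\lambda}^{j}y_{i}(\cdot,\lambda_{0}):i=1,2,\ 0\le j\le p-1\}$ is a basis of that space, since $(l-\lambda_{0})^{p-1-j}\partial_{\lambda}^{j}y_{i}(\cdot,\lambda_{0})=j!\,y_{i}(\cdot,\lambda_{0})$ and $y_{1}(\cdot,\lambda_{0}),y_{2}(\cdot,\lambda_{0})$ have linearly independent Cauchy data at $x=0$. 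Under this isomorphism the subspace $V_{p}:=\{A:\deg A\le p-1,\ M(\lambda)A(\lambda)=O((\lambda-\lambda_{0})^{p})\}$ is carried onto $\ker(L-\lambda_{0})^{p}$, whence $\dim\ker(L-\lambda_{0})^{p}=\dim V_{p}$.

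It then remains to evaluate $\dim V_{p}$ for large $p$. Identifying the polynomials of degree $\le p-1$ with $(\mathcal{O}/\mathfrak{m}^{p})^{2}$, where $\mathcal{O}$ is the ring of holomorphic germs at $\lambda_{0}$ and $\mathfrak{m}=(\lambda-\lambda_{0})$, the space $V_{p}$ is the kernel of multiplication by $M(\lambda)$ on $(\mathcal{O}/\mathfrak{m}^{p})^{2}$. Writing the Smith normal form $M(\lambda)=E(\lambda)\,\mathrm{diag}\bigl((\lambda-\lambda_{0})^{\kappa_{1}},(\lambda-\lambda_{0})^{\kappa_{2}}\bigr)\,F(\lambda)$ with $E,F$ holomorphic and invertible near $\lambda_{0}$ and $0\le\kappa_{1}\le\kappa_{2}$, multiplicativity of the determinant gives $\kappa_{1}+\kappa_{2}=\mathrm{ord}_{\lambda_{0}}\det M=\mathrm{ord}_{\lambda_{0}}\Delta=n_{0}$, and one reads off that the cokernel---and hence the kernel, the map being an endomorphism of a finite-dimensional space---has dimension $\min(p,\kappa_{1})+\min(p,\kappa_{2})$. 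For $p\ge\kappa_{2}$ this equals $n_{0}$; since $p\mapsto\dim\ker(L-\lambda_{0})^{p}$ is nondecreasing and the generalized eigenspace is finite-dimensional, we get $\chi(\lambda_{0})=\dim\bigcup_{k\ge1}\ker(L-\lambda_{0})^{k}=n_{0}$.

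The step I expect to be the main obstacle is the identification $\ker(L-\lambda_{0})^{p}\cong V_{p}$: with only the $L^{1}$-regularity of $w$ and $b_{1}/b_{0}$ available, one has to show that $\{y:(l-\lambda_{0})^{p}y=0\}$ really is $2p$-dimensional with the stated basis, and---more delicately---that membership of the whole chain $Y_{0},\dots,Y_{p-1}$ in $\mathrm{dom}(L)$ is equivalent to the single vanishing condition on $M(\lambda)A(\lambda)$, i.e.\ that the nonlocal boundary conditions are the only obstruction to lying in the domain. What follows is then linear algebra over the discrete valuation ring $\mathcal{O}$. I would also record at the outset that the resolvent of $L$ differs from that of its Dirichlet realization by an operator of rank at most $2$ whose kernel is an entire function of $\lambda$ divided by $\Delta(\lambda)$; this shows that every eigenvalue is isolated with a finite-dimensional generalized eigenspace, which is what makes writing $\chi(\lambda_{0})=\dim\bigcup_{k}\ker(L-\lambda_{0})^{k}$ legitimate.
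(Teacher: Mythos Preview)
Your argument is correct and complete in outline; in particular the identification $\ker(L-\lambda_{0})^{p}\cong V_{p}$ goes through as you describe, since every element of $\ker(L-\lambda_{0})^{p}$ is a solution of the $2p$-dimensional system $(l-\lambda_{0})^{p}y=0$ and is therefore captured by some polynomial $A$, while conversely $A\in V_{p}$ forces the entire chain $Y_{0},\dots,Y_{p-1}$ into $\mathrm{dom}(L)$. The Smith normal form computation over $\mathcal{O}/\mathfrak{m}^{p}$ is clean and handles both geometric multiplicities at once.

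The paper's proof runs in the opposite direction. Rather than parametrize Jordan chains by polynomial vectors and reduce to local algebra, it \emph{starts} from a Jordan basis $\eta,F\eta,\dots,F^{m_{0}-1}\eta$ of the generalized eigenspace, packages the chain into a new fundamental system $\phi_{1}(x,\lambda),\phi_{2}(x,\lambda)$ of $(l-\lambda)y=0$ (adding a correction term built from the resolvent of the initial-value operator $\widetilde{L}_{0}$), and then expresses $\Delta(\lambda)$ as $(\lambda-\lambda_{0})^{\chi(\lambda_{0})}$ times an explicit $2\times2$ determinant $\det(g_{ij}(\lambda))$; the heart of the argument is showing by contradiction that $\det(g_{ij}(\lambda_{0}))\neq0$, using that an extra relation would produce an element of $\ker(L-\lambda_{0})^{m_{0}+1}\setminus\ker(L-\lambda_{0})^{m_{0}}$. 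This requires a case split on the geometric multiplicity $m\in\{1,2\}$. Your route avoids that split and replaces the ad hoc nonvanishing argument by the structural fact $\kappa_{1}+\kappa_{2}=\mathrm{ord}_{\lambda_{0}}\Delta$; the paper's route, in exchange, stays closer to elementary ODE manipulations and gives an explicit factorization of $\Delta$ near $\lambda_{0}$. Both are valid; yours is essentially the root-function/Keldysh viewpoint specialized to a $2\times2$ characteristic matrix.
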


We would like to emphasize that this theorem is useful for identifying the\
multiplicities of eigenvalues of the operator $L.$ For example, it provides
a straightforward method to obtain the main result in \cite[Theorem 1]%
{diffusion} (see Remark \ref{extend diffusion}). Moreover, suppose $b_{0}$
and $b_{1}\ $be constants and $\nu _{0}=\nu _{1}=\delta _{\frac{1}{2}},$
then as a consequence of Theorem \ref{equal}, Remark \ref{open diffusion}
shows us that all the eigenvalues of $L$ are algebraically simple. This
partially answers an open problem listed in \cite{diffusion}.

\section{Basic Properties and Preliminaries}

Let us first recall some notations and definitions.

\begin{notation}
Let $T$ be a linear operator in a Hilbert space $H$. In what follows, dom$%
(T) $, $\ker (T)$ are the domain, the kernel of $T$, respectively$;$ $\sigma
(T), $ $\sigma _{p}(T),$ $\rho (T),$ denote the spectrum, point spectrum,
the resolvent set of $T$, respectively$;$ $R_{\lambda }(T)$ := $(T-\lambda
I)^{-1}$, $\lambda \in \rho (T)$, is the resolvent of $T$.
\end{notation}

\begin{definition}
Let $T$ be a linear operator in a Hilbert space $H$. The smallest integer $%
p>0$ such that $\ker (T^{p})=$ $\ker (T^{p+1})$ is called the ascent of $T$
and it is denoted by $\alpha (T)$.
\end{definition}

\begin{definition}
Let $T$ be a closed linear operator in a Hilbert space $H$ and let $\lambda
_{0}$ be an eigenvalue of $T.$ Then the space $\ker ((T-\lambda _{0}I))$ is
called the eigenspace of $T$ corresponding to $\lambda _{0},$ and its
dimension is called the \textbf{geometric multiplicity} of $\lambda _{0}$.
The space $\underset{n=1}{\overset{\infty }{\cup }}\ker ((T-\lambda
_{0}I)^{n})$ is called the generalized eigenspace of $T$ corresponding to $%
\lambda _{0},$ with its dimension referred to as the \textbf{algebraic
multiplicity} of $\lambda _{0}.$
\end{definition}

In this section, we mainly prove the following proposition which will be
used in the proof of our main theorem.

\begin{proposition}
\label{bounded}The operator $L$ is closed and has a purely discrete
spectrum. Moreover, for any point $\lambda _{0}\in \sigma ( L) ,$ $\alpha
(L-\lambda _{0}I)$ is finite.
\end{proposition}

In order to prove Proposition \ref{bounded}, we first consider the
differential operator $L_{0}$ in $L_{w}^{2}(J,%
\mathbb{C}
)$ defined by
\begin{eqnarray*}
L_{0}y:&=&b_{0}(x)y^{\prime \prime }+b_{1}(x)y^{\prime }, \\
\text{dom}(L_{0}):&=&\left\{ y\in L_{w}^{2}(J,%
\mathbb{C}
)\left\vert
\begin{array}{l}
y,y^{\prime }\in AC[0,1],L_{0}y\in L_{w}^{2}(J,%
\mathbb{C}
), \\
y(0)=y(1)=0%
\end{array}%
\right. \right\} .
\end{eqnarray*}%
It is well known that
\begin{equation*}
(R_{\lambda }(L_{0})f)(x)=\int_{0}^{1}G_{\lambda }^{0}(x,t)f(t)\mathrm{d}t,%
\text{ }x\in \left[ 0,1\right] ,\text{ }f\in L_{w}^{2}(J,%
\mathbb{C}
)\text{ }
\end{equation*}%
where%
\begin{equation*}
G_{\lambda }^{0}(x,t)=\left\{
\begin{array}{l}
\frac{y_{2}(t,\lambda )\left[ y_{2}(x,\lambda )y_{1}(1,\lambda
)-y_{1}(x,\lambda )y_{2}(1,\lambda )\right] }{b_{0}(t)W(t)y_{2}(1,\lambda )}%
,0\leq t\leq x, \\
\frac{y_{2}(x,\lambda )\left[ y_{2}(t,\lambda )y_{1}(1,\lambda
)-y_{1}(t,\lambda )y_{2}(1,\lambda )\right] }{b_{0}(t)W(t)y_{2}(1,\lambda )}%
,x\leq t\leq 1%
\end{array}%
\right\} .
\end{equation*}%
Here $W(x)=\exp (-\int_{0}^{x}\frac{b_{1}(t)}{b_{0}(t)}\mathrm{d}t)\ $is the
Wronskian of $y_{1}$ and $y_{2}.$ It is obvious that $R_{\lambda }(L_{0}):$ $%
L_{w}^{2}(J,%
\mathbb{C}
)\longrightarrow $ dom$(L_{0})$ is a compact operator for $\lambda \in \rho
(L_{0})=%
\mathbb{C}
\backslash \sigma (L_{0}),$ where $\sigma (L_{0})=\left\{ \lambda
_{n}\right\} $ and $\lambda _{n}$ are zeros of the entire function $%
y_{2}(1,\lambda )$(\cite[Sec. III., Example 6.11]{kato}).

Next, we give the formula on the resolvent $R_{\lambda }(L)$, following
which Proposition \ref{bounded} can be proved directly.

\begin{lemma}
\label{resolvent}For every $\lambda \in
\mathbb{C}
\backslash \left[ \sigma (L_{0})\cup \sigma _{p}(L)\right] ,$ the resolvent $%
R_{\lambda }(L)$ of $L$ admits the following decomposition%
\begin{eqnarray}
(R_{\lambda }(L)f)(x) &=&(R_{\lambda
}(L_{0})f)(x)+g_{0}(x)\int_{0}^{1}(R_{\lambda }(L_{0})f)(x)\mathrm{d}\nu
_{0}(x)  \label{resolvent decom} \\
&&+g_{1}(x)\int_{0}^{1}(R_{\lambda }(L_{0})f)(x)\mathrm{d}\nu _{1}(x)  \notag
\end{eqnarray}%
for each $f\in L_{w}^{2}(J,%
\mathbb{C}
)\ $and $x\in \left[ 0,1\right] ,$ where
\begin{equation*}
g_{0}(x)=\frac{\left[ y_{2}(1,\lambda )-\int_{0}^{1}y_{2}(x,\lambda )\mathrm{%
d}\nu _{1}(x)\right] y_{1}(x,\lambda )-\left[ y_{1}(1,\lambda
)-\int_{0}^{1}y_{1}(x,\lambda )\mathrm{d}\nu _{1}(x)\right] y_{2}(x,\lambda )%
}{\Delta (\lambda )}
\end{equation*}%
and%
\begin{equation*}
g_{1}(x)=\frac{\left[ 1-\int_{0}^{1}y_{1}(x,\lambda )\mathrm{d}\nu _{0}(x)%
\right] y_{2}(x,\lambda )+y_{1}(x,\lambda )\int_{0}^{1}y_{2}(x,\lambda )%
\mathrm{d}\nu _{0}(x)}{\Delta (\lambda )}.
\end{equation*}
\end{lemma}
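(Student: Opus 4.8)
The plan is to solve the boundary value problem $(L-\lambda I)y=f$ explicitly by variation of parameters and read off the resolvent. Fix $f\in L_{w}^{2}(J,\mathbb{C})$ and $\lambda \in \mathbb{C}\backslash \lbrack \sigma (L_{0})\cup \sigma _{p}(L)]$, and set $u_{0}:=R_{\lambda }(L_{0})f$. From the Green's function formula it is standard that $u_{0},u_{0}^{\prime }\in AC[0,1]$, that $lu_{0}-\lambda u_{0}=f$, and that $u_{0}(0)=u_{0}(1)=0$. Consequently every function $y$ with $y,y^{\prime }\in AC[0,1]$ solving $ly-\lambda y=f$ has the form $y=u_{0}+c_{1}y_{1}(\cdot ,\lambda )+c_{2}y_{2}(\cdot ,\lambda )$ for constants $c_{1},c_{2}$; for any such $y$ one automatically has $Ly=ly\in L_{w}^{2}(J,\mathbb{C})$, so $y\in \mathrm{dom}(L)$ holds if and only if the two nonlocal boundary conditions $y(0)=\int_{0}^{1}y\,\mathrm{d}\nu _{0}$ and $y(1)=\int_{0}^{1}y\,\mathrm{d}\nu _{1}$ are satisfied.

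The next step is to translate these two conditions into a linear system for $(c_{1},c_{2})$. Using $y_{1}(0,\lambda )=1$, $y_{2}(0,\lambda )=0$, $u_{0}(0)=u_{0}(1)=0$, a direct substitution gives
\begin{equation*}
\left(
\begin{array}{cc}
\int_{0}^{1}y_{1}(x,\lambda )\mathrm{d}\nu _{0}(x)-1 & \int_{0}^{1}y_{2}(x,\lambda )\mathrm{d}\nu _{0}(x) \\
\int_{0}^{1}y_{1}(x,\lambda )\mathrm{d}\nu _{1}(x)-y_{1}(1,\lambda ) & \int_{0}^{1}y_{2}(x,\lambda )\mathrm{d}\nu _{1}(x)-y_{2}(1,\lambda )
\end{array}
\right)
\left(
\begin{array}{c}
c_{1} \\ c_{2}
\end{array}
\right)
=-\left(
\begin{array}{c}
\int_{0}^{1}u_{0}(x)\mathrm{d}\nu _{0}(x) \\
\int_{0}^{1}u_{0}(x)\mathrm{d}\nu _{1}(x)
\end{array}
\right) ,
\end{equation*}
whose coefficient matrix is precisely the matrix in \eqref{chara}, with determinant $\Delta (\lambda )$. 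Since $\lambda \notin \sigma _{p}(L)$ we have $\Delta (\lambda )\neq 0$, so Cramer's rule yields a unique solution $(c_{1},c_{2})$, depending linearly on $f$ through the two functionals $\int_{0}^{1}u_{0}\,\mathrm{d}\nu _{0}$ and $\int_{0}^{1}u_{0}\,\mathrm{d}\nu _{1}$.

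Substituting the Cramer expressions for $c_{1},c_{2}$ back into $y=u_{0}+c_{1}y_{1}+c_{2}y_{2}$ and grouping the terms that multiply $\int_{0}^{1}u_{0}\,\mathrm{d}\nu _{0}$ and $\int_{0}^{1}u_{0}\,\mathrm{d}\nu _{1}$ respectively produces exactly \eqref{resolvent decom}, where the coefficient of $\int_{0}^{1}u_{0}\,\mathrm{d}\nu _{0}$ is the stated $g_{0}(x)$ and the coefficient of $\int_{0}^{1}u_{0}\,\mathrm{d}\nu _{1}$ is the stated $g_{1}(x)$; this identification is a routine $2\times 2$ determinant computation. Finally, the map $f\mapsto y$ thus defined is bounded on $L_{w}^{2}(J,\mathbb{C})$ (because $R_{\lambda }(L_{0})$ maps boundedly into $C[0,1]$ and integration against the probability measures $\nu _{0},\nu _{1}$ is continuous there), it takes values in $\mathrm{dom}(L)$, and it satisfies $(L-\lambda I)y=f$; injectivity of $L-\lambda I$ (again from $\Delta (\lambda )\neq 0$, since a homogeneous solution would force $(c_{1},c_{2})$ into the trivial kernel of the above matrix) shows it is a two-sided inverse. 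Hence $\lambda \in \rho (L)$ and $R_{\lambda }(L)$ has the claimed form. I do not expect any conceptual obstacle here; the only real work is the bookkeeping in matching the Cramer's-rule coefficients to the prescribed $g_{0},g_{1}$ and in verifying that the constructed $y$ genuinely lies in $\mathrm{dom}(L)$, i.e. the regularity of $u_{0}$ and its derivative.
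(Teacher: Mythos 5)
Your proposal is correct, and the Cramer's-rule bookkeeping does reproduce the stated $g_0$ and $g_1$ exactly (I checked: the coefficient of $\int_0^1 u_0\,\mathrm{d}\nu_0$ coming from $c_1y_1+c_2y_2$ is $\Delta(\lambda)^{-1}\bigl[(y_2(1,\lambda)-\int_0^1 y_2\,\mathrm{d}\nu_1)y_1(x,\lambda)-(y_1(1,\lambda)-\int_0^1 y_1\,\mathrm{d}\nu_1)y_2(x,\lambda)\bigr]=g_0(x)$, and similarly for $g_1$). Your route is genuinely different in structure from the paper's: you \emph{derive} the formula by writing the general solution of $(l-\lambda)y=f$ as $R_\lambda(L_0)f+c_1y_1+c_2y_2$ and solving the $2\times 2$ system whose matrix is exactly the one defining $\Delta(\lambda)$, then dispose of the left-inverse issue by injectivity of $L-\lambda I$ (a nontrivial homogeneous solution would force a nontrivial kernel of that matrix, contradicting $\Delta(\lambda)\neq 0$). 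The paper instead takes the formula as given and \emph{verifies} it is a two-sided inverse: it checks by substitution that $R_\lambda(L)f$ satisfies the two nonlocal boundary conditions and the inhomogeneous equation (right inverse), and then runs a separate explicit computation with $\widetilde\Psi:=R_\lambda(L_0)(L-\lambda)\Psi-\Psi$ to confirm the left-inverse identity. Your derivation has the advantage of explaining where $g_0,g_1$ come from and of replacing the left-inverse computation with a one-line uniqueness argument; the paper's verification avoids Cramer's rule entirely. The only points to make sure you state cleanly in a final write-up are the ones you already flag: that $u_0,u_0'\in AC[0,1]$ with $u_0(0)=u_0(1)=0$ (so the ansatz exhausts all candidates), and that $f\mapsto\bigl(\int_0^1 u_0\,\mathrm{d}\nu_0,\int_0^1 u_0\,\mathrm{d}\nu_1\bigr)$ is bounded, which follows since $w=-1/b_0\in L^1$ and $y_1,y_2,W^{-1}$ are bounded on $[0,1]$, so $R_\lambda(L_0)$ maps $L^2_w$ boundedly into $C[0,1]$.
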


\begin{proof}
Firstly, it is easy to see that $R_{\lambda }( L) $ is a bounded operator on
$L_{w}^{2}( J,%
\mathbb{C}
) .$ In fact, the last two terms of the decomposition represent finite rank
perturbations of the compact operator $R_{\lambda }( L_{0}) . $ More
specifically, for $i=0,1,$
\begin{equation*}
g_{i}(x)\int_{0}^{1}(R_{\lambda }(L_{0})f)(x)\mathrm{d}\nu
_{i}(x)=g_{i}(x)\int_{0}^{1}\int_{0}^{1}G_{\lambda }^{0}(x,t)f(t)\mathrm{d}t%
\mathrm{d}\nu _{i}(x)
\end{equation*}%
are continuous on $\left[ 0,1\right] $ for $\lambda \in
\mathbb{C}
\backslash \left[ \sigma (L_{0})\cup \sigma _{p}(L)\right] .$

Next, we prove that $R_{\lambda }(L)f\in $ dom$(L).$ Indeed, the fact
\begin{equation*}
(R_{\lambda }(L_{0})f)(0)=(R_{\lambda }(L_{0})f)(1)=0
\end{equation*}%
yields that
\begin{eqnarray*}
(R_{\lambda }(L)f)(0) &=&\int_{0}^{1}(R_{\lambda }(L)f)(x)\mathrm{d}\nu
_{0}(x) \\
&=&\frac{y_{2}(1,\lambda )-\int_{0}^{1}y_{2}(x,\lambda )\mathrm{d}\nu _{1}(x)%
}{\Delta (\lambda )}\int_{0}^{1}(R_{\lambda }(L_{0})f)(x)\mathrm{d}\nu
_{0}(x) \\
&&+\frac{\int_{0}^{1}y_{2}(x,\lambda )\mathrm{d}\nu _{0}(x)}{\Delta (\lambda
)}\int_{0}^{1}(R_{\lambda }(L_{0})f)(x)\mathrm{d}\nu _{1}(x)
\end{eqnarray*}%
and%
\begin{eqnarray*}
&&(R_{\lambda }(L)f)(1) \\
&=&\int_{0}^{1}(R_{\lambda }(L)f)(x)\mathrm{d}\nu _{1}(x) \\
&=&\frac{y_{2}(1,\lambda )\int_{0}^{1}y_{1}(x,\lambda )\mathrm{d}\nu
_{1}(x)-y_{1}(1,\lambda )\int_{0}^{1}y_{2}(x,\lambda )\mathrm{d}\nu _{1}(x)}{%
\Delta (\lambda )}\int_{0}^{1}(R_{\lambda }(L_{0})f)(x)\mathrm{d}\nu _{0}(x)
\\
&&+\frac{\left[ 1-\int_{0}^{1}y_{1}(x,\lambda )\mathrm{d}\nu _{0}(x)\right]
y_{2}(1,\lambda )+y_{1}(1,\lambda )\int_{0}^{1}y_{2}(x,\lambda )\mathrm{d}%
\nu _{0}(x)}{\Delta (\lambda )}\int_{0}^{1}(R_{\lambda }(L_{0})f)(x)\mathrm{d%
}\nu _{1}(x).
\end{eqnarray*}%
Moreover, it is easy to deduce that
\begin{equation*}
b_{0}(x)(R_{\lambda }(L)f)^{\prime \prime }+b_{1}(x)(R_{\lambda
}(L)f)^{\prime }-\lambda (R_{\lambda }(L)f)=f\in L_{w}^{2}(J,%
\mathbb{C}
).
\end{equation*}%
Therefore, $R_{\lambda }(L)$ is a bounded operator from $L_{w}^{2}(J,%
\mathbb{C}
)\ $to dom$(L)$ and $R_{\lambda }(L)$ is the right inverse of $L-\lambda .$
It remains to show that $R_{\lambda }(L)$ is the left inverse of $L-\lambda .
$ In fact, for every $\Psi \in $ dom$(L),$ denote
\begin{equation*}
\Psi _{0}(x)=\left[ R_{\lambda }(L_{0})(L-\lambda )\Psi \right] (x),\text{ }%
\widetilde{\Psi }:=\Psi _{0}-\Psi .
\end{equation*}%
Then%
\begin{equation*}
\left\{
\begin{array}{l}
b_{0}(x)\widetilde{\Psi }^{\prime \prime }+b_{1}(x)\widetilde{\Psi }^{\prime
}=\lambda \widetilde{\Psi }, \\
\widetilde{\Psi }(0)=\int_{0}^{1}\widetilde{\Psi }(x)\mathrm{d}\nu
_{0}(x)-\int_{0}^{1}\Psi _{0}(x)\mathrm{d}\nu _{0}(x),\text{ } \\
\widetilde{\Psi }(1)=\int_{0}^{1}\widetilde{\Psi }(x)\mathrm{d}\nu
_{1}(x)-\int_{0}^{1}\Psi _{0}(x)\mathrm{d}\nu _{1}(x).%
\end{array}%
\right.
\end{equation*}%
This yields that
\begin{equation*}
\widetilde{\Psi }(x)=-g_{0}(x)\int_{0}^{1}\Psi _{0}(x)\mathrm{d}\nu
_{0}(x)-g_{1}(x)\int_{0}^{1}\Psi _{0}(x)\mathrm{d}\nu _{1}(x).
\end{equation*}%
Thus for every $\Psi \in $ dom$(L)$ and $\lambda \in
\mathbb{C}
\backslash \left[ \sigma (L_{0})\cup \sigma _{p}(L)\right] ,$ it follows
from $(\ref{resolvent decom})$ that%
\begin{eqnarray*}
&&\left[ R_{\lambda }(L)(L-\lambda )\Psi \right] (x) \\
&=&\Psi _{0}(x)+g_{0}(x)\int_{0}^{1}\left[ R_{\lambda }(L_{0})(L-\lambda
)\Psi \right] (x)\mathrm{d}\nu _{0}(x) \\
&&+g_{1}(x)\int_{0}^{1}\left[ R_{\lambda }(L_{0})(L-\lambda )\Psi \right] (x)%
\mathrm{d}\nu _{1}(x) \\
&=&\Psi _{0}(x)-\widetilde{\Psi }(x)=\Psi (x).
\end{eqnarray*}%
This completes the proof.
\end{proof}

Now we are in a position to prove Proposition \ref{bounded}.

\begin{proof}[Proof of Proposition \protect\ref{bounded}]
From Proposition \ref{resolvent}, it follows that the resolvent $R_{\lambda
}( L) $ of $L$ is a compact operator, thus the operator $L$ is closed and
its spectrum is purely discrete. Moreover, for any point $\lambda _{0}\in
\sigma ( L) ,$ from Lemma \ref{resolvent} we know that $\lambda _{0}$ is a
pole of $R_{\lambda }( L) $ of finite order. Thus the finiteness of $\alpha
(L-\lambda _{0}I)$ follows from \cite[Chap. V, Theorem 10.1]{Taylor}. This
proves Proposition \ref{bounded}.
\end{proof}

In addition, let us recall several facts which will be used in the next
section.

\begin{lemma}
\label{L2} The initial problem consisting of equation $(\ref{fangcheng})$
and the initial conditions
\begin{equation}
y(0,\lambda )=h,\text{ }y^{\prime }(0,\lambda )=k,  \label{6}
\end{equation}%
where $h,$ $k\in\mathbb{C} $, has a unique solution $y(x,\lambda )$. And
each of the functions $y(x,\lambda )$ and $y^{\prime }(x,\lambda )$ is
continuous on $[0,1]\times
\mathbb{C}
,$ in particular, the functions $y(x,\lambda )$ and $y^{\prime }(x,\lambda )$
are entire functions of $\lambda \in
\mathbb{C}
.$
\end{lemma}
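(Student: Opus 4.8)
The plan is to reformulate the second-order equation $(\ref{fangcheng})$ as a first-order linear system with $L^{1}$ coefficients and then run the method of successive approximations. Writing $Y=(y,y')^{T}$ and using $w=-1/b_{0}$, equation $(\ref{fangcheng})$ is equivalent to
\begin{equation*}
Y'(x)=A(x,\lambda)Y(x),\qquad A(x,\lambda)=\begin{pmatrix} 0 & 1 \\ -\lambda w(x) & -\,b_{1}(x)/b_{0}(x) \end{pmatrix},
\end{equation*}
subject to $Y(0)=(h,k)^{T}$. Since $w\in L^{1}(J)$ and $b_{1}/b_{0}\in L^{1}(J)$, every entry of $A(\cdot,\lambda)$ lies in $L^{1}(0,1)$; this is exactly the Carath\'{e}odory setting, in which a solution is understood as an absolutely continuous $Y$ satisfying the equivalent Volterra integral equation
\begin{equation*}
Y(x,\lambda)=\begin{pmatrix} h \\ k \end{pmatrix}+\int_{0}^{x}A(t,\lambda)Y(t,\lambda)\,\mathrm{d}t .
\end{equation*}

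First I would establish existence and uniqueness by Picard iteration. Set $Y_{0}\equiv(h,k)^{T}$ and $Y_{n+1}(x,\lambda)=(h,k)^{T}+\int_{0}^{x}A(t,\lambda)Y_{n}(t,\lambda)\,\mathrm{d}t$. Writing $m(t)=\|A(t,\lambda)\|$ and $\mu(x)=\int_{0}^{x}m(t)\,\mathrm{d}t$, note that $m\in L^{1}(0,1)$ with $\mu(1)\le C(1+|\lambda|)$ for a constant $C$ depending only on $\|w\|_{L^{1}}$ and $\|b_{1}/b_{0}\|_{L^{1}}$. A straightforward induction then gives the bound
\begin{equation*}
\|Y_{n+1}(x,\lambda)-Y_{n}(x,\lambda)\|\le \|(h,k)\|\,\frac{\mu(x)^{n+1}}{(n+1)!},
\end{equation*}
so the series $\sum_{n}(Y_{n+1}-Y_{n})$ converges absolutely and uniformly on $[0,1]$, and in fact uniformly on $[0,1]\times K$ for every compact $K\subset\mathbb{C}$, since $\mu(1)$ is bounded on $K$. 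The limit $Y$ solves the integral equation, hence furnishes an absolutely continuous solution of $(\ref{fangcheng})$; uniqueness follows from Gronwall's inequality applied to the difference of two solutions sharing the same initial data.

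It remains to transfer continuity and analyticity from the iterates to the limit. Each $Y_{n}(x,\lambda)$ is jointly continuous on $[0,1]\times\mathbb{C}$ by induction, using that $A(t,\lambda)Y_{n}(t,\lambda)$ is $L^{1}$ in $t$ and continuous in $\lambda$; moreover, because $A(t,\lambda)=A_{0}(t)+\lambda A_{1}(t)$ is affine in $\lambda$, a parallel induction shows each $Y_{n}(x,\cdot)$ is a polynomial in $\lambda$ of degree $\le n$, in particular entire. Since $Y_{n}\to Y$ uniformly on $[0,1]\times K$ for every compact $K$, the limit $Y(\cdot,\cdot)$ is jointly continuous, and, for each fixed $x$, $Y(x,\cdot)$ is a locally uniform limit of entire functions, hence entire by Weierstrass's theorem. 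Reading off the two components of $Y$ yields the claimed continuity and entireness of $y(x,\lambda)$ and $y'(x,\lambda)$.

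The main obstacle is the merely $L^{1}$ regularity of the coefficients, which rules out the classical existence theorem for continuous data; the resolution is to work throughout with the Volterra integral formulation and the $L^{1}$-norm $\mu$, whose linear growth in $|\lambda|$ is precisely what guarantees uniform convergence on compact $\lambda$-sets and thus the passage of both continuity and analyticity to the limit.
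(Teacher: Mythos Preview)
Your argument is correct and self-contained. The paper does not actually prove this lemma but simply refers the reader to Zettl's monograph \cite{CC7}; the Picard iteration on the first-order system with $L^{1}$ coefficients that you carry out is exactly the standard argument recorded there, so your approach coincides with the cited source while supplying the details the paper omits.
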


\begin{proof}
See \cite{CC7}.
\end{proof}

\begin{remark}
\label{derivative} In fact, from \cite{CC7} one also has the derivative of $%
y(x,\lambda )$ with respect to $\lambda $ is given by
\begin{equation*}
y_{\lambda }^{\prime }(x,\lambda )=\int_{0}^{x}\frac{y_{2}(x,\lambda
)y_{1}(t,\lambda )-y_{1}(x,\lambda )y_{2}(t,\lambda )}{b_{0}(t)\exp (
-\int_{0}^{t}\frac{b_{1}(s)}{b_{0}(s)}\mathrm{d}s) }y(t,\lambda )\mathrm{d}t.
\end{equation*}
\end{remark}

\begin{remark}
Lemma $\ref{L2}$ implies that $\Delta (\lambda )$ is an entire function of $%
\lambda \in
\mathbb{C}
.$
\end{remark}

\begin{remark}
\label{resolvent0}Consider the differential operator $\widetilde{L}_{0}$ in $%
L_{w}^{2}(J,%
\mathbb{C}
)$ defined by
\begin{eqnarray*}
\widetilde{L}_{0}y &:&=b_{0}(x)y^{\prime \prime }+b_{1}(x)y^{\prime }, \\
\text{dom}(\widetilde{L}_{0}) &:&=\left\{ y\in L_{w}^{2}(J,%
\mathbb{C}
)\left\vert
\begin{array}{l}
y,y^{\prime }\in AC[0,1],\widetilde{L}_{0}y\in L_{w}^{2}(J,%
\mathbb{C}
), \\
y(0)=y^{\prime }(0)=0%
\end{array}%
\right. \right\} .
\end{eqnarray*}%
It is obvious that the resolvent set $\rho (\widetilde{L}_{0})=%
\mathbb{C}
.$ Moreover, direct calculation yields that for each $f\in L_{w}^{2}(J,%
\mathbb{C}
)$ and $x\in \left[ 0,1\right] ,$
\begin{equation*}
(R_{\lambda }(\widetilde{L}_{0})f)(x)=\int_{0}^{x}\frac{y_{2}(x,\lambda
)y_{1}(t,\lambda )-y_{1}(x,\lambda )y_{2}(t,\lambda )}{b_{0}(t)\exp
(-\int_{0}^{t}\frac{b_{1}(s)}{b_{0}(s)}\mathrm{d}s)}f(t)\mathrm{d}t.
\end{equation*}%
Therefore, $(R_{\lambda }(\widetilde{L}_{0})f)(x)$ is an entire function of $%
\lambda \in\mathbb{C}.$
\end{remark}

\section{Proof of Theorem \protect\ref{equal} and Remarks}

Based on the statements given in the previous section, we present the proof
of Theorem \ref{equal} in this section and use this result to solve several
problems.

\begin{proof}[Proof of Theorem \protect\ref{equal}]
Let $m_{0}$ denote the ascent of the operator $L-\lambda _{0}I$, then $\chi
(\lambda _{0})=\dim \ker (( L-\lambda _{0}I) ^{m_{0}}).$ Denote the
geometric multiplicity of the eigenvalue $\lambda _{0}$ by $m.$ Then it is
obvious that $m\leq 2.$ Note that we will mainly prove the statement of this
theorem in the case of $m=1,$ since the proof for $m=2$ can be given only
with a slight modification.

When $m=1,$ the proof can be divided into two steps.

\textbf{Step 1. }When $\lambda $ is sufficiently close to $\lambda _{0},$ we
first construct two linear independent solutions $\phi _{1}( x,\lambda ) $
and $\phi _{2}( x,\lambda ) $ of the equation $( l-\lambda I) y=0$ via the
generalized eigenfunctions of $\lambda _{0}.$ Let us recall that $%
ly=b_{0}(x)y^{\prime \prime }+b_{1}(x)y^{\prime }.$

Define a linear operator $F$ on the finite dimensional space $\ker ((
L-\lambda _{0}I) ^{m_{0}})$ as follows:%
\begin{equation*}
F=\left. ( L-\lambda _{0}I) \right\vert \ker (( L-\lambda _{0}I) ^{m_{0}}).
\end{equation*}%
Then $F^{m_{0}}=0$ and $F^{m_{0}-1}\neq 0,$ i.e., $F$ is nilpotent with
index $m_{0}.$ It follows from \cite[Chapter 57, Theorem 2]{vector} that
there exist functions
\begin{equation*}
\eta ,F\eta ,\ldots F^{m_{0}-1}\eta
\end{equation*}%
form a basis of the generalized space $\ker (( L-\lambda _{0}I) ^{m_{0}}).$
Note that in this case $m_{0}=\chi (\lambda _{0}).$ Then denote
\begin{equation*}
\xi _{0,1}:=F^{m_{0}-1}\eta ,\text{ }\xi _{1,1}:=F^{m_{0}-2}\eta ,\cdots
\text{ }\xi _{m_{0}-1,1}:=\eta.
\end{equation*}%
Select another solution $\xi _{0,2}$ of the equation $( l-\lambda _{0}I) y=0$
such that $\xi _{0,1}$ and $\xi _{0,2}$ are fundamental solutions of $(
l-\lambda _{0}I) y=0.$

For $\lambda \in
\mathbb{C}
,$ define
\begin{eqnarray}
\phi _{1}(x,\lambda ) &=&\sum_{k=0}^{m_{0}-1}(\lambda -\lambda _{0})^{k}\xi
_{k,1}(x)+(\lambda -\lambda _{0})^{m_{0}}(R_{\lambda }(\widetilde{L}%
_{0})\eta )(x),  \label{definition solution} \\
\phi _{2}(x,\lambda ) &=&\xi _{0,2}(x)+(\lambda -\lambda _{0})(R_{\lambda }(%
\widetilde{L}_{0})\xi _{0,2})(x).  \label{definition solution1}
\end{eqnarray}%
Note that $\phi _{i}(x,\lambda _{0})=\xi _{0,i}(x),$ $i=1,2.$ Then we will
show that $\phi _{1}(x,\lambda )$ and $\phi _{2}(x,\lambda )$ are linear
independent solutions of the equation $(l-\lambda I)y=0.$ In fact,%
\begin{eqnarray*}
&&((l-\lambda I)\phi _{1})(x,\lambda ) \\
&=&\sum_{k=1}^{m_{0}-1}(\lambda -\lambda _{0})^{k}((l-\lambda _{0}I)\xi
_{k,1})(x)-\sum_{k=0}^{m_{0}-1}(\lambda -\lambda _{0})^{k+1}\xi
_{k,1}(x)+(\lambda -\lambda _{0})^{m_{0}}\eta (x) \\
&=&\sum_{k=1}^{m_{0}-1}(\lambda -\lambda _{0})^{k}\xi
_{k-1,1}(x)-\sum_{k=0}^{m_{0}-1}(\lambda -\lambda _{0})^{k+1}\xi
_{k,1}(x)+(\lambda -\lambda _{0})^{m_{0}}\xi _{m_{0}-1,1}(x) \\
&=&0
\end{eqnarray*}%
and%
\begin{equation*}
((l-\lambda I)\phi _{2})(x,\lambda )=((l-\lambda I)\xi _{0,2})(x)+(\lambda
-\lambda _{0})\xi _{0,2}(x)=0.
\end{equation*}%
Moreover, since $\xi _{0,1}$ and $\xi _{0,2}$ are linear independent
solutions of $(l-\lambda _{0}I)y=0,$ we have
\begin{equation*}
\det \left(
\begin{array}{cc}
\phi _{1}(0,\lambda _{0}) & \phi _{2}(0,\lambda _{0}) \\
\phi _{1}^{\prime }(0,\lambda _{0}) & \phi _{2}^{\prime }(0,\lambda _{0})%
\end{array}%
\right) =\det \left(
\begin{array}{cc}
\xi _{0,1}(0) & \xi _{0,2}(0) \\
\xi _{0,1}^{\prime }(0) & \xi _{0,2}^{\prime }(0)%
\end{array}%
\right) \neq 0.
\end{equation*}%
As a consequence of Remark \ref{resolvent0}, $\phi _{i}(0,\lambda )$ and $%
\phi _{i}^{\prime }(0,\lambda ),$ $i=1,2$ are entire functions of $\lambda
\in
\mathbb{C}
,$ hence there exists a number $\delta >0$ such that for $\left\vert \lambda
-\lambda _{0}\right\vert <\delta ,$ $\phi _{1}(x,\lambda )$ and $\phi
_{2}(x,\lambda )$ are linear independent solutions of $(l-\lambda I)y=0.$

\textbf{Step 2. }Based on step 1, when $\left\vert \lambda -\lambda
_{0}\right\vert <\delta ,$ one has
\begin{equation}
\left(
\begin{array}{c}
y_{1}(x,\lambda ) \\
y_{2}(x,\lambda )%
\end{array}%
\right) =\left(
\begin{array}{cc}
b_{11}(\lambda ) & b_{12}(\lambda ) \\
b_{21}(\lambda ) & b_{22}(\lambda )%
\end{array}%
\right) \left(
\begin{array}{c}
\phi _{1}(x,\lambda ) \\
\phi _{2}(x,\lambda )%
\end{array}%
\right)  \label{linear}
\end{equation}%
and $\det \left(
\begin{array}{cc}
b_{11}(\lambda ) & b_{12}(\lambda ) \\
b_{21}(\lambda ) & b_{22}(\lambda )%
\end{array}%
\right) \neq 0.$ Thus from $(\ref{linear}),$ the definition of $\phi _{1},$ $%
\phi _{2},$ and the fact $\xi _{k,1}\in $ dom$(L-\lambda _{0}I),\
k=0,1,\ldots m_{0}-1,$ it follows that
\begin{eqnarray*}
&&\Delta (\lambda ) \\
&=&\det \left(
\begin{array}{cc}
\int_{0}^{1}y_{1}(x,\lambda )\mathrm{d}\nu _{0}(x)-y_{1}(0,\lambda ) &
\int_{0}^{1}y_{2}(x,\lambda )\mathrm{d}\nu _{0}(x)-y_{2}(0,\lambda ) \\
\int_{0}^{1}y_{1}(x,\lambda )\mathrm{d}\nu _{1}(x)-y_{1}(1,\lambda ) &
\int_{0}^{1}y_{2}(x,\lambda )\mathrm{d}\nu _{1}(x)-y_{2}(1,\lambda )%
\end{array}%
\right) \\
&=&(\lambda -\lambda _{0})^{m_{0}}\det (g_{i,j}(\lambda ))\det \left(
\begin{array}{cc}
b_{11}(\lambda ) & b_{21}(\lambda ) \\
b_{12}(\lambda ) & b_{22}(\lambda )%
\end{array}%
\right) ,
\end{eqnarray*}%
where $\left\vert \lambda -\lambda _{0}\right\vert <\delta ,$ and for $%
i=1,2, $
\begin{eqnarray*}
g_{i,1}(\lambda ) &=&\int_{0}^{1}(R_{\lambda }(\widetilde{L}_{0})\eta )(x)%
\mathrm{d}\nu _{i-1}(x)-(R_{\lambda }(\widetilde{L}_{0})\eta )(i-1), \\
g_{i,2}(\lambda ) &=&(\lambda -\lambda _{0})\left[ \int_{0}^{1}(R_{\lambda }(%
\widetilde{L}_{0})\xi _{0,2})(x)\mathrm{d}\nu _{i-1}(x)-(R_{\lambda }(%
\widetilde{L}_{0})\xi _{0,2})(i-1)\right] \\
&&+\int_{0}^{1}\xi _{0,2}(x)\mathrm{d}\nu _{i-1}(x)-\xi _{0,2}(i-1).
\end{eqnarray*}%
Recall that in this case $m_{0}=\chi (\lambda _{0}).$ Thus in order to show
the order of $\lambda _{0}$ as a zero of $\Delta (\lambda )$ is equal to $%
\chi (\lambda _{0}),$ it is sufficient to prove that $\det (g_{i,j}(\lambda
_{0}))\neq 0$ since $g_{i,j}(\lambda )$ are entire functions of $\lambda \in
\mathbb{C}
.$ Otherwise, there exists a constant $c$ such that
\begin{eqnarray*}
&&\left(
\begin{array}{c}
\int_{0}^{1}(R_{\lambda _{0}}(\widetilde{L}_{0})\eta )(x)\mathrm{d}\nu
_{0}(x)-(R_{\lambda _{0}}(\widetilde{L}_{0})\eta )(0) \\
\int_{0}^{1}(R_{\lambda _{0}}(\widetilde{L}_{0})\eta )(x)\mathrm{d}\nu
_{1}(x)-(R_{\lambda _{0}}(\widetilde{L}_{0})\eta )(1)%
\end{array}%
\right) \\
&=&c\left(
\begin{array}{c}
\int_{0}^{1}\xi _{0,2}(x)\mathrm{d}\nu _{0}(x)-\xi _{0,2}(0) \\
\int_{0}^{1}\xi _{0,2}(x)\mathrm{d}\nu _{1}(x)-\xi _{0,2}(1)%
\end{array}%
\right) .
\end{eqnarray*}%
Denote $u(x)=(R_{\lambda _{0}}(\widetilde{L}_{0})\eta )(x)-c\xi _{0,2}(x),$
then the above equation implies that
\begin{equation*}
\int_{0}^{1}u(x)\mathrm{d}\nu _{0}(x)=u(0)\text{ and}\int_{0}^{1}u(x)\mathrm{%
d}\nu _{1}(x)=u(1).
\end{equation*}%
Therefore,
\begin{equation}
(l-\lambda _{0}I)u=\eta \in \ker ((L-\lambda _{0}I)^{m_{0}})  \label{uuu}
\end{equation}%
and hence
\begin{equation*}
u\in \ker ((L-\lambda _{0}I)^{m_{0}+1})=\ker ((L-\lambda _{0}I)^{m_{0}}).
\end{equation*}%
This implies that there exists constants $\alpha _{i}$ such that $%
u=\sum\limits_{i=0}^{m_{0}-1}\alpha _{i}F^{i}\eta .\ $Thus
\begin{equation*}
(l-\lambda _{0}I)u=\sum\limits_{i=0}^{m_{0}-1}\alpha _{i}(l-\lambda
_{0}I)F^{i}\eta =\sum\limits_{i=0}^{m_{0}-2}\alpha _{i}F^{i+1}\eta
=\sum\limits_{i=1}^{m_{0}-1}\alpha _{i-1}F^{i}\eta .
\end{equation*}%
This together with $(\ref{uuu})$ yield that $\eta
=\sum\limits_{i=1}^{m_{0}-1}\alpha _{i-1}F^{i}\eta $ which contradicts the
linear independence of $\eta ,F\eta ,\ldots F^{m_{0}-1}\eta $. This proves $%
\det (g_{i,j}(\lambda _{0}))\neq 0.$ Hence the statement of Theorem \ref%
{equal} in the case of $m=1$ is proved.

Now we turn to the case $m=2.$ We only need to make slight modifications on
the solutions $\phi _{1},$ $\phi _{2}\ $and $(g_{i,j}(\lambda )).$ Note that
it follows from \cite[Chapter 57, Theorem 2]{vector} that there exists
functions $\eta _{1,}$ $\eta _{2}\in \ker ((L-\lambda _{0}I)^{m_{0}})$ such
that
\begin{eqnarray*}
&&\eta _{1},F\eta _{1},\ldots \ F^{q_{1}-1}\eta _{1}, \\
&&\eta _{2},F\eta _{2},\ldots \ F^{q_{2}-1}\eta _{2}
\end{eqnarray*}%
form a basis of the generalized space $\ker ((L-\lambda _{0}I)^{m_{0}})$
where $q_{1}+q_{2}=\chi (\lambda _{0})$, $m_{0}=q_{1}\geq q_{2}>0$ and $%
F^{q_{1}}\eta _{1}=F^{q_{2}}\eta _{2}=0.$ In this case, denote
\begin{equation*}
\xi _{0,i}:=F^{q_{i}-1}\eta _{i},\text{ }\xi _{1,i}:=F^{q_{i}-2}\eta
_{i},\cdots \text{ }\xi _{m_{0}-1,i}:=\eta _{i},\ i=1,2.
\end{equation*}%
Hence $\xi _{0,1}$ and $\xi _{0,2}$ are fundamental solutions of $(l-\lambda
_{0}I)y=0.$

For $\lambda \in
\mathbb{C}
,$ define
\begin{equation}
\phi _{i}(x,\lambda )=\sum_{k=0}^{q_{i}-1}(\lambda -\lambda _{0})^{k}\xi
_{k,i}(x)+(\lambda -\lambda _{0})^{m_{0}}(R_{\lambda }(\widetilde{L}%
_{0})\eta _{i})(x),\ i=1,2.
\end{equation}%
Note that $\phi _{i}(x,\lambda _{0})=\xi _{0,i}(x),$ $i=1,2.$ By a process
similar to that in the case $m=1$, it can be obtained that
\begin{equation*}
g_{i,j}(\lambda )=\int_{0}^{1}(R_{\lambda }(\widetilde{L}_{0})\eta _{j})(x)%
\mathrm{d}\nu _{i-1}(x)-(R_{\lambda }(\widetilde{L}_{0})\eta _{j})(i-1).
\end{equation*}%
Similarly, $\det (g_{i,j}(\lambda _{0}))\neq 0$. Otherwise, there exists a
constant $c$ such that
\begin{eqnarray*}
&&\left(
\begin{array}{c}
\int_{0}^{1}(R_{\lambda _{0}}(\widetilde{L}_{0})\eta _{1})(x)\mathrm{d}\nu
_{0}(x)-(R_{\lambda _{0}}(\widetilde{L}_{0})\eta _{1})(0) \\
\int_{0}^{1}(R_{\lambda _{0}}(\widetilde{L}_{0})\eta _{1})(x)\mathrm{d}\nu
_{1}(x)-(R_{\lambda _{0}}(\widetilde{L}_{0})\eta _{1})(1)%
\end{array}%
\right) \\
&=&c\left(
\begin{array}{c}
\int_{0}^{1}(R_{\lambda _{0}}(\widetilde{L}_{0})\eta _{2})(x)\mathrm{d}\nu
_{0}(x)-(R_{\lambda _{0}}(\widetilde{L}_{0})\eta _{2})(0) \\
\int_{0}^{1}(R_{\lambda _{0}}(\widetilde{L}_{0})\eta _{2})(x)\mathrm{d}\nu
_{1}(x)-(R_{\lambda _{0}}(\widetilde{L}_{0})\eta _{2})(1)%
\end{array}%
\right) .
\end{eqnarray*}%
Denote $u(x)=(R_{\lambda _{0}}(\widetilde{L}_{0})\eta _{1})(x)-c(R_{\lambda
_{0}}(\widetilde{L}_{0})\eta _{2})(x),$ then $\int_{0}^{1}u(x)\mathrm{d}\nu
_{0}(x)=u(0)$ and $\int_{0}^{1}u(x)\mathrm{d}\nu _{1}(x)=u(1).$ Hence
\begin{equation*}
(l-\lambda _{0}I)u=\eta _{1}-c\eta _{2}\in \ker ((L-\lambda _{0}I)^{m_{0}})
\end{equation*}%
and
\begin{equation*}
u\in \ker ((L-\lambda _{0}I)^{m_{0}+1})=\ker ((L-\lambda _{0}I)^{m_{0}}).
\end{equation*}%
This implies that there exist constants $\alpha _{i},$ $\beta _{i}$ such
that $u=\sum\limits_{i=0}^{q_{1}-1}\alpha _{i}F^{i}\eta
_{1}+\sum\limits_{i=0}^{q_{2}-1}\beta _{i}F^{i}\eta _{2}.$ Hence
\begin{equation*}
\eta _{1}-c\eta _{2}=(l-\lambda _{0}I)u=\sum\limits_{i=1}^{q_{1}-1}\alpha
_{i-1}F^{i}\eta _{1}+\sum\limits_{i=1}^{q_{2}-1}\beta _{i-1}F^{i}\eta _{2}.
\end{equation*}%
This contradicts the linear independence of $\eta _{i},F\eta _{i},\ldots \
F^{q_{i}-1}\eta _{i},i=1,2$. Now the proof is completed.
\end{proof}

Based on Theorem \ref{equal}, we conclude this paper with three remarks on
two concrete eigenvalue problems which have been treated in \cite%
{diffusion,kolb3,drift}.

\begin{remark}
\label{extend diffusion}Consider the eigenvalue problem with coefficients $%
b_{0}\equiv -1,$ $b_{1}\equiv 0$ and $\nu _{0}=\nu _{1}=\delta _{a},$ $a\in
(0,1),$ i.e.,
\begin{equation}
\left\{
\begin{array}{l}
-y^{\prime \prime }(x)=\lambda y(x),\text{ }x\in (0,1), \\
y(0)=y(a)=y(1),\text{ }a\in (0,1).%
\end{array}%
\right.  \label{dexin}
\end{equation}%
In \cite[Theorem 1]{diffusion}, M. Kolb and D. Krej\v{c}i\v{r}\'{\i}k showed
all the eigenvalues of the problem $(\ref{dexin})$ are algebraically simple
if, and only if, $a\notin $ $%
\mathbb{Q}
.$ Based on Theorem \ref{equal}, this interesting result can be obtained
from a different perspective.
\end{remark}

In fact, from $(\ref{chara})$ we know that $\lambda $ is an eigenvalue of
the problem $(\ref{dexin})$ if and only if
\begin{equation*}
\Delta (\lambda )=-\frac{4}{\sqrt{\lambda }}\sin \frac{\sqrt{\lambda }(1-a)}{%
2}\sin \frac{\sqrt{\lambda }a}{2}\sin \frac{\sqrt{\lambda }}{2}=0.
\end{equation*}%
Furthermore,
\begin{eqnarray*}
\Delta ^{\prime }(\lambda )&=&\frac{2}{\lambda ^{\frac{3}{2}}}\sin \frac{%
\sqrt{\lambda }(1-a)}{2}\sin \frac{\sqrt{\lambda }a}{2}\sin \frac{\sqrt{%
\lambda }}{2}-\frac{1-a}{\lambda }\cos \frac{\sqrt{\lambda }(1-a)}{2}\sin
\frac{\sqrt{\lambda }a}{2}\sin \frac{\sqrt{\lambda }}{2} \\
&&-\frac{a}{\lambda }\sin \frac{\sqrt{\lambda }(1-a)}{2}\cos \frac{\sqrt{%
\lambda }a}{2}\sin \frac{\sqrt{\lambda }}{2}-\frac{1}{\lambda }\sin \frac{%
\sqrt{\lambda }(1-a)}{2}\sin \frac{\sqrt{\lambda }a}{2}\cos \frac{\sqrt{%
\lambda }}{2}.
\end{eqnarray*}%
Thus it is easy to see that for any $\widehat{\lambda }\neq 0,$ $\Delta (%
\widehat{\lambda })=\Delta ^{\prime }(\widehat{\lambda })=0$ if and only if
\begin{equation*}
\sin \frac{\sqrt{\widehat{\lambda }}(1-a)}{2}=\sin \frac{\sqrt{\widehat{%
\lambda }}a}{2}=\sin \frac{\sqrt{\widehat{\lambda }}}{2}=0,
\end{equation*}%
i.e.,
\begin{equation}
\widehat{\lambda }=(2m\pi )^{2}=(\frac{2n\pi }{a})^{2}=(\frac{2l\pi }{1-a}%
)^{2},\text{ }m,\text{ }n,\text{ }l\in
\mathbb{N}
:=\left\{ 1,2,\ldots \right\} .  \label{irra}
\end{equation}%
Hence for each $\widehat{\lambda }\neq 0\ $which satisfies $\Delta (\widehat{%
\lambda })=\Delta ^{\prime }(\widehat{\lambda })=0,$ direct calculation
yields that $\Delta ^{\prime \prime }(\widehat{\lambda })=0\ $and $\Delta
^{\prime \prime \prime }(\widehat{\lambda })=\frac{3a^{2}-3a}{8\widehat{%
\lambda }^{2}}\neq 0.$ Obviously, $\Delta ^{\prime }(0)=\frac{a\left(
a-1\right) }{2}\neq 0.$ Thus it follows from Theorem \ref{equal} that the
algebraic multiplicity of each eigenvalue of the problem $(\ref{dexin})$ is
either one or three. Moreover, since $(\ref{irra})$ implies that $a=\frac{n}{%
m}=1-\frac{l}{m}\in
\mathbb{Q}
,$ we can easily conclude that all the eigenvalues of the problem $(\ref%
{dexin})$ are algebraically simple if, and only if, $a\notin $ $%
\mathbb{Q}
.$

\begin{remark}
\label{open diffusion}Consider the eigenvalue problem with constant
coefficients $b_{0}<0,\ b_{1}\in \mathbb{%
\mathbb{R}
}$ and $\nu _{0}=\nu _{1}=\delta _{\frac{1}{2}},$ i.e.,%
\begin{equation}
\left\{
\begin{array}{l}
b_{0}y^{\prime \prime }(x)+b_{1}y^{\prime }(x)=\lambda y(x),\text{ }x\in
(0,1), \\
y(0)=y(\frac{1}{2})=y(1).%
\end{array}%
\right.  \label{de}
\end{equation}%
Assume $b_{1}\neq 0.$ It follows from Theorem $\ref{equal}$ that each
eigenvalue of the problem $(\ref{de})$ is algebraically and geometrically
simple. This partially answers an open question posed by M. Kolb and D. Krej%
\v{c}i\v{r}\'{\i}k \cite[Section 8]{diffusion}.
\end{remark}

In fact, under the transformation $v(x)=\exp (\frac{b_{1}x}{2b_{0}})y(x),$
problem $(\ref{de})$ is equivalent to the following eigenvalue problem%
\begin{equation}
\left\{
\begin{array}{l}
-v^{\prime \prime }(x)+qv(x)=-\frac{\lambda }{b_{0}}v(x),\text{ }x\in (0,1),
\\
v(0)=Av(\frac{1}{2})=\text{ }A^{2}v(1)%
\end{array}%
\right.  \label{vde}
\end{equation}%
where $q=\frac{1}{4}(\frac{b_{1}}{b_{0}})^{2}\ $and $A=\exp (\frac{b_{1}}{%
-4b_{0}}).$ Let $v_{1}(x,\lambda )$ and $v_{2}(x,\lambda )$ be the
fundamental solutions of the differential equation in $(\ref{vde})$ with the
initial conditions%
\begin{equation}
v_{1}(0,\lambda )=v_{2}^{\prime }(0,\lambda )=1,\text{ }v_{2}(0,\lambda
)=v_{1}^{\prime }(0,\lambda )=0,\ \lambda \in
\mathbb{C}
.
\end{equation}%
Then $v_{1}(x,\lambda )=\cos \left( \sqrt{-\frac{\lambda }{b_{0}}-q}x\right)
,$ $v_{2}(x,\lambda )=\frac{\sin \left( \sqrt{-\frac{\lambda }{b_{0}}-q}%
x\right) }{\sqrt{-\frac{\lambda }{b_{0}}-q}}.$ It can be easily obtained
that $\lambda $ is an eigenvalue of the problem $(\ref{de})$ or $(\ref{vde})$
if and only if
\begin{equation*}
\Delta _{1}(\lambda )=\det \left(
\begin{array}{cc}
Av_{1}(\frac{1}{2},\lambda )-1 & Av_{2}(\frac{1}{2},\lambda ) \\
1-A^{2}v_{1}(1,\lambda ) & -A^{2}v_{2}(1,\lambda )%
\end{array}%
\right) =0.
\end{equation*}%
For simplicity, let $u=-\frac{\lambda }{b_{0}}-q,$ then denote
\begin{equation}
\widetilde{\Delta }_{1}(u):=\Delta _{1}(-b_{0}\left( u+q\right) )=-2A^{2}%
\frac{\sin \frac{\sqrt{u}}{2}}{\sqrt{u}}\left( \frac{A^{2}+1}{2A}-\cos \frac{%
\sqrt{u}}{2}\right) .  \label{delta1}
\end{equation}%
When $b_{1}\neq 0,$ it is obvious that $\frac{A^{2}+1}{2A}>1.$ Let $u_{n}$
be the zeros of $\Delta _{1}(u).$ Then direct calculation yields that $%
\left\{ u_{n}\right\} =\left\{ u_{n,1}\right\} \cup \left\{ u_{n,2}\right\}
\cup \left\{ u_{n,3}\right\} ,$
\begin{eqnarray*}
u_{n,1} &=&(2n\pi )^{2},\text{ }u_{n,2}=(4n\pi -2ir)^{2},\text{ }n\in
\mathbb{N}
, \\
\text{ }u_{n,3} &=&(4n\pi +2ir)^{2},\text{ }n\in
\mathbb{N}
_{0}:=\left\{ 0,1,2,\ldots \right\} .
\end{eqnarray*}%
Here $r>0\ $and cosh$r=\frac{A^{2}+1}{2A},$ i.e., $r=\frac{b_{1}}{-4b_{0}}.$
Hence eigenvalues $\lambda _{n}$ of the problem $(\ref{de})$ or $(\ref{vde})$
are as follows:\ $\left\{ \lambda _{n}\right\} =\left\{ \lambda
_{n,1}\right\} \cup \left\{ \lambda _{n,2}\right\} \cup \left\{ \lambda
_{n,3}\right\} ,$
\begin{eqnarray}
\lambda _{n,1} &=&-4b_{0}n^{2}\pi ^{2}-\frac{b_{1}^{2}}{4b_{0}},\text{ }%
\lambda _{n,2}=-16b_{0}n^{2}\pi ^{2}-2b_{1}n\pi i,\text{ }n\in
\mathbb{N}
,  \label{1} \\
\lambda _{n,3} &=&-16b_{0}n^{2}\pi ^{2}+2b_{1}n\pi i,\text{ }n\in
\mathbb{N}
_{0}.  \label{2}
\end{eqnarray}%
For each eigenvalue $\lambda _{n}$ of the problem $(\ref{de}),$ one can
easily obtain $\Delta _{1}^{\prime }(\lambda _{n})\neq 0.$ In order to use
Theorem \ref{equal} to show that each eigenvalue of the problem $(\ref{de})$
is algebraically simple, it is sufficient to show that $\Delta _{1}(\lambda
)\equiv \Delta (\lambda ).$ Note that $\Delta (\lambda )$ is the
characteristic function defined in $(\ref{chara}).$ Denote
\begin{equation*}
\widetilde{y}_{1}(x,\lambda ):=\exp \left( -\frac{b_{1}x}{2b_{0}}\right)
v_{1}(x,\lambda ),\text{ }\widetilde{y}_{2}(x,\lambda ):=\exp \left( -\frac{%
b_{1}x}{2b_{0}}\right) v_{2}(x,\lambda ),
\end{equation*}%
then $\widetilde{y}_{1}(x,\lambda )$ and $\widetilde{y}_{2}(x,\lambda )$ are
solutions of the differential equation in $(\ref{de})$ determined by the
initial conditions%
\begin{equation*}
\widetilde{y}_{1}(0,\lambda )=1,\text{ }\widetilde{y}_{1}^{\prime
}(0,\lambda )=-\frac{b_{1}}{2b_{0}},\text{ }\widetilde{y}_{2}(0,\lambda )=0,%
\text{ }\ \widetilde{y}_{2}^{\prime }(0,\lambda )=1,\lambda \in
\mathbb{C}
.
\end{equation*}%
Thus $\widetilde{y}_{1}(x,\lambda )=$ $y_{1}(x,\lambda )-\frac{b_{1}}{2b_{0}}%
y_{2}(x,\lambda ),$ $\widetilde{y}_{2}(x,\lambda )=$ $y_{2}(x,\lambda ).$
Hence
\begin{eqnarray*}
\Delta _{1}(\lambda ) &=&\det \left(
\begin{array}{cc}
\widetilde{y}_{1}(\frac{1}{2},\lambda )-1 & \widetilde{y}_{2}(\frac{1}{2}%
,\lambda ) \\
1-\widetilde{y}_{1}(1,\lambda ) & -\widetilde{y}_{2}(1,\lambda )%
\end{array}%
\right) \\
&=&\det \left(
\begin{array}{cc}
y_{1}(\frac{1}{2},\lambda )-1 & y_{2}(\frac{1}{2},\lambda ) \\
1-y_{1}(1,\lambda ) & -y_{2}(1,\lambda )%
\end{array}%
\right) =\Delta (\lambda ).
\end{eqnarray*}%
Therefore, each eigenvalue of the problem $(\ref{de})$ is algebraically and
thus geometrically simple.

\begin{remark}
Denote the spectral gap of the problem $( \ref{de}) $ by $\gamma _{1}(
\delta _{\frac{1}{2}}) ,$ i.e.$,$%
\begin{equation*}
\gamma _{1}( \delta _{\frac{1}{2}}) :=\inf \left\{ \left. \text{Re}\lambda
\right\vert \lambda \text{ is an eigenvalue of the problem }( \ref{de})
\text{ and }\lambda \neq 0\text{ }\right\} .
\end{equation*}%
Note that when $b_{1}=0,$ $\lambda _{n}=-4b_{0}n^{2}\pi ^{2},$ $n\in
\mathbb{N}
_{0}$. This together with $( \ref{1}) $ and $( \ref{2}) $ yield
\begin{equation*}
\gamma _{1}( \delta _{\frac{1}{2}}) =\left\{
\begin{array}{c}
-4b_{0}\pi ^{2}-\frac{b_{1}^{2}}{4b_{0}},\text{ when }\left\vert
b_{1}\right\vert \leq -4\sqrt{3}b_{0}\pi , \\
-16b_{0}\pi ^{2},\text{ when }\left\vert b_{1}\right\vert >-4\sqrt{3}%
b_{0}\pi ,%
\end{array}%
\right.
\end{equation*}%
which is already given in \cite{kolb3} and \cite{drift} by different
approaches.
\end{remark}

\end{document}